\newtheorem{theorem}{Theorem}[section]
\newtheorem*{theorem*}{Theorem}
\newtheorem{lemma}[theorem]{Lemma}
\newtheorem{corollary}[theorem]{Corollary}
\newtheorem{definition}[theorem]{Definition}
\newcommand{\R}{\mathbb{R}}
\newcommand{\C}{\mathbb{C}}
\title[On real polynomial local homeomorphisms ]{On  real polynomial local homeomorphisms of three dimensional space}
\author{Alexandre Fernandes}
\address{Alexandre Fernandes -
Departamento de Matem\'atica, Universidade Federal do Cear\'a, Av.
Humberto Monte, s/n Campus do Pici,  60440-900, Brasil.}
\email{alex@mat.ufc.br}
\author{Zbigniew Jelonek}
\address{Zbigniew Jelonek -
Instytut Matematyczny, Polska Akademia Nauk, \'Sniadeckich 8, 00-656 Warszawa, Poland.}
\email{najelone@cyf-kr.edu.pl}
\thanks{Z. Jelonek is partially supported by the grant of Narodowe Centrum Nauki, grant number 2015/17/B/ST1/02637. A. Fernandes is partially supported by CNPq grant  N 302764/2014-7 }
\begin{document}

\subjclass[2010]{14R99,14P10}

\begin{abstract}
We prove that the set of non-properness of  a polynomial mapping of the three dimensional space which is a local homeomorphism cannot be homeomorphic to the real line $\R.$
\end{abstract}
\maketitle

 \section{Introduction}

\medskip

A local homeomorphism $F$ of an affine space  is a global homeomorphism if and only if it is a proper mapping in the topological sense, i.e. $F^{-1}(K)$ is compact for any compact subset $K\subset\R^n$. Let us denote  the set of values where $F$ is not proper by $S_F$
(see \cite{Je1} and \cite{Je2}), i.e.,
$$S_F=\{ y\in\R^n \ | \exists \ \mbox{ unbounded sequence } (x_k) \mbox{ in } \R^n; \ \mbox{with} \ F(x_k)\to y \}.$$ 
A local homeomorphism $F\colon\R^n\rightarrow\R^n$ is a global homeomorphism  if and only if the set $S_F$ is empty. The set $S_F$ plays an important role  in the study of the topological behavior of  local homeomorphism $\R^n\rightarrow\R^n.$ In \cite{Je1} and \cite{Je2}, the second named author investigated the geometry of the set $S_F$ for polynomial mappings $\C^n\rightarrow\C^n$ and
$\R^n\to\R^n.$ In particular we have:

\begin{theorem}(see \cite{Je2})
Let  $f:\R^n\rightarrow\R^n$ be a  polynomial mapping whose Jacobian
 nowhere vanishes. If ${\rm codim} \ S_f\geq 3$ then $f$ is
a diffeomorphism 
(and consequently $S_f=\emptyset$).
\end{theorem}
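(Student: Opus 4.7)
The strategy is to deduce that under $\mathrm{codim}\, S_f\ge 3$ the map $f$ is both injective and surjective; since $f$ is a local diffeomorphism, this will force $f$ to be a global diffeomorphism, whence automatically $S_f=\emptyset$. The injectivity half is a clean covering-space argument. Because $S_f$ is semi-algebraic of codimension $\ge 3$, the complement $U:=\R^n\setminus S_f$ is open, connected and simply connected, and since $f$ is a local diffeomorphism the set $f^{-1}(S_f)$ has the same codimension $\ge 3$, so $\widetilde U:=\R^n\setminus f^{-1}(S_f)$ is also connected. By the very definition of $S_f$, the restriction $f|_{\widetilde U}\colon\widetilde U\to U$ is a proper local diffeomorphism, hence a covering map; a covering of a simply connected base by a connected total space has degree one, so $f|_{\widetilde U}$ is a homeomorphism.

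Next I would upgrade this to global injectivity. If $f(a)=f(b)$ with $a\neq b$, pick disjoint neighborhoods $W_a, W_b$ on which $f$ is a diffeomorphism; then $f(W_a)\cap f(W_b)$ is open and nonempty, so (because $S_f$ has empty interior) it meets $U$, producing two distinct preimages in $\widetilde U$ of the same point of $U$, contradicting the previous step. Hence $f$ is injective on all of $\R^n$. Its image $V:=f(\R^n)$ is then open and $f\colon\R^n\to V$ is a diffeomorphism. A direct check shows $\R^n\setminus V\subseteq S_f$ (any point outside $V$ is a limit $y=\lim f(x_k)$ with $\|x_k\|\to\infty$), so the complement of $V$ is again of codimension $\ge 3$.

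The remaining step, $V=\R^n$, is the delicate one, and I expect it to be the main obstacle. A purely topological argument is not sufficient: $\R^n$ minus a tamely embedded half-line (for $n\ge 4$) is already homeomorphic to $\R^n$, exhibiting acyclic proper open subsets of $\R^n$ with codimension-$\ge 3$ complement. My plan is therefore to invoke the algebraic theorem of Bia\l ynicki-Birula--Rosenlicht that an injective polynomial self-map of $\R^n$ is automatically surjective, yielding $V=\R^n$ at once. An alternative, more intrinsic route would exploit the structural description of $S_f$ via the curve selection lemma: every point of $S_f$ is of the form $\lim_{t\to 0^+}f(\gamma(t))$ for some real analytic arc $\gamma$ escaping to infinity, and one should be able to contradict the existence of such a $\gamma$ landing in $\R^n\setminus V$ by tracking $f$ along $\gamma$ and comparing with the diffeomorphism $f\colon\R^n\to V$. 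Either way, once surjectivity is in hand the theorem follows immediately from the fact that a bijective local diffeomorphism is a diffeomorphism.
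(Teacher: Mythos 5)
Your argument is correct, but note that the paper itself does not prove this statement: it is quoted from \cite{Je2} as background, so there is no internal proof to compare against. Your route is essentially the standard one (and the same template the authors use later for their main theorem): since $\dim S_f\le n-3$, the complement $U=\R^n\setminus S_f$ is connected and simply connected; $f^{-1}(S_f)$ also has codimension $\ge 3$ because $f$ is a local diffeomorphism, so $\widetilde U=f^{-1}(U)$ is connected; the restriction $f|_{\widetilde U}\colon\widetilde U\to U$ is proper (a compact $K\subset U$ cannot have unbounded preimage, by the definition of $S_f$, and $f^{-1}(K)\subset\widetilde U$), hence --- being also open with image closed and open in the connected set $U$ --- it is a surjective covering, necessarily of degree one over a simply connected base, which gives injectivity on $\widetilde U$ and then, by your open-sets argument using that $S_f$ has empty interior, injectivity of $f$ on all of $\R^n$. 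Your decision to settle surjectivity by the Bia\l ynicki-Birula--Rosenlicht theorem (injective polynomial self-maps of $\R^n$ are bijective) is exactly the tool the paper invokes via \cite{BB} at the end of its proof of Theorem \ref{main-theorem}, and your remark that purely topological reasoning cannot close this step (e.g.\ $\R^n$ minus a tame ray is homeomorphic to $\R^n$ for $n\ge 4$) is a sound justification for reaching for the algebraic input. Finally, bijective local diffeomorphisms are global diffeomorphisms, and a homeomorphism of $\R^n$ is proper, so $S_f=\emptyset$. I see no gap; only be explicit, as above, about why the proper local diffeomorphism $f|_{\widetilde U}$ is onto $U$ before calling it a covering.
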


\vspace{5mm}

On the other hand the example of Pinchuk   shows
that there are  polynomial mappings $f:\R^n\to\R^n$ whose Jacobian 
nowhere vanishes and with codim $S_f=1.$ In  Pinchuk's original example we have $n=2$ and  the set $S_f$ is homeomorphic to the real line $\R$
(see  \cite{cam}, \cite{gw}). In the original Pinchuk example the mapping $f$ is not surjective, in fact it omits two points - $(0,0)$ and $(-1,0)$ -see  \cite{gw}. However we can easily modify the Pinchuk example in this way that a modified mapping is a surjection. Indeed, let 
$\phi : \C\ni z \mapsto z^3/3+z^2/2\in \C$, where we treat $\C$ as $\R^2.$ The mapping $\phi$ is singular only for $z=0$ and $z=-1$, hence the composition $F=\phi\circ f$ has not singular points. The mapping $F$ can omit only points $\phi(0)=0$ and $\phi(-1)=1/6.$ However $\phi(0)=\phi(-3/2)$ and 
$\phi(-1)=1/6=\phi(1/2).$ Hence the mapping $F$ is a surjection. This means that also surjective counterexamples of Pinchuk type exists. Note that the set $S_F$ is still an "irreducible" connected curve.

However, the Pinchuk  example can be easily generalized for every $n>1$ (and then  $S_f$ is homeomorphic to $\R^{n-1}$ or $f$ is surjective mapping and $S_f$ is "irreducible" connected hypersurface).
Hence the only interesting case
is that of codim $S_f=2$, and in \cite{Je2} the following was stated:

\vspace{5mm}

{\bf Real  Jacobian  Conjecture.}  {\it  Let  $f:\Bbb
R^n\rightarrow\Bbb R^n$ be a polynomial mapping whose Jacobian
nowhere vanishes. If ${\rm codim} \ S_f\geq 2$ then $f$ is a diffeomorphism
(and consequently $S_f=\emptyset$).}

\vspace{5mm}

The Real Jacobian Conjecture (in dimension $2n$)
implies the famous
Jacobian Conjecture (in dimension $n$) - see \cite{Je2}, \cite{arno}:

\vspace{5mm}

{\bf Jacobian  Conjecture.}  {\it  Let  $f:\Bbb
C^n\rightarrow\Bbb C^n$ be a polynomial mapping whose Jacobian
 nowhere vanishes. Then $f$ is an isomorphism.}

\vspace{5mm}
\noindent Moreover, the Real Jacobian Conjecture is true in dimension two (see \cite{FMS}, \cite{Je2}). In dimension three to prove the Real Jacobian Conjecture it is enough to assume that the set $S_f$ is a curve. In this case the curve $S_f$ is automatically the union of real parametric curves, i.e., $S_f=\bigcup^r_{i=1} S_i$, where $S_i=\phi_i(\R)$ and $\phi_i: \R\to\R^3, \ i=1,...,r$ are  non-constant polynomial mappings (see 
\cite{Je2}). 

The aim of this note is to show that the set $S_F$ of non-properness of a polynomial mapping $f:\R^3\to\R^3$  which is a local homeomorphism cannot be homeomorphic to  $\R.$ Moreover, we show that if additionally $f$ is a surjection, then $S_f$ has to have at least three ends.

\section{Main results}

\begin{definition}
{\rm Let  $X\subset \R^3$ be a semi-algebraic curve. Denote by $B_R$ the open ball with  center  $0$ and  radius $R.$  We say that $X$ has $n$ {\it ends at infinity} if the set $X\setminus B_R$ has $n$ connected components for every sufficiently large $R>0$. } 
\end{definition}

\begin{definition}
{\rm A  subset of $\R^3$ is called a \textit{line} if it is closed and homeomorphic to $\R$.}
\end{definition}

It is easy to see that a line has exactly two ends at infinity.

\begin{theorem}\label{main-theorem} If $F:\R^3\to\R^3$ is a polynomial local diffeomorphism,
then $S_F$ cannot be a line.
\end{theorem}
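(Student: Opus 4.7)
The plan is to derive a contradiction from the covering-space structure that $F$ inherits off of $S_F$. Put $V=\R^3\setminus S_F$. The restriction $F|_{F^{-1}(V)}\colon F^{-1}(V)\to V$ is a proper local homeomorphism---properness is immediate from the definition of $S_F$, since a divergent sequence in $F^{-1}(V)$ whose image converged to a point $y\in V$ would force $y\in S_F$---hence a finite covering, say of degree $d$. A key observation, exploited throughout the plan, is that since $F$ is a local diffeomorphism and $\dim S_F=1$, the preimage $F^{-1}(S_F)$ also has dimension $1$ in $\R^3$, and therefore cannot separate $\R^3$: the domain $F^{-1}(V)=\R^3\setminus F^{-1}(S_F)$ is \emph{connected}, and the covering is therefore connected of degree $d$.

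I would then compute first homology via Alexander duality. As a tame proper arc with two ends at infinity, $K:=S_F\cup\{\infty\}\subset S^3$ is a topological circle, so $H_1(V)=\Z$, generated by a meridian $\mu$ of $S_F$. Writing the smooth components of $F^{-1}(S_F)$ as $a$ circles and $b$ proper arcs (each with both ends at $\infty$), the set $F^{-1}(S_F)\cup\{\infty\}\subset S^3$ is homotopy equivalent to the disjoint union of $a$ circles with a wedge of $b$ circles at $\infty$; Alexander duality yields $H_1(F^{-1}(V))=\Z^{a+b}$.

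Next I would pin down $a$, $b$, and $d$ using the covering structure. Assuming $S_F$ is unknotted so that $\pi_1(V)=\Z$, the connected cover has $\pi_1(F^{-1}(V))\cong d\Z$ cyclic, forcing $a+b=1$. One rules out $a=1$: a compact component $C\subset F^{-1}(S_F)$ would map by $F$ as a local diffeomorphism into $S_F\cong\R$, giving an open-and-compact, hence empty, image in $\R$---a contradiction. So $F^{-1}(S_F)$ is a single proper smooth arc whose meridian $\tilde\mu$ generates $\pi_1(F^{-1}(V))$. Since $F$ is a local diffeomorphism, a small meridian of $F^{-1}(S_F)$ is carried to a small meridian of $S_F$, i.e.\ $F_*\tilde\mu=\pm\mu$; combined with $F_*\pi_1(F^{-1}(V))=d\Z$, this forces $d=1$.

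Finally, with $d=1$ the map $F|_{F^{-1}(V)}$ is a homeomorphism, and $F|_{F^{-1}(S_F)}$ is a smooth local diffeomorphism $\R\to\R$, hence also injective; so $F$ is globally injective on $\R^3$, thus a diffeomorphism onto the open subset $F(\R^3)$. As such it is proper, which rules out the existence of any $y\in S_F\cap F(\R^3)$ (an unbounded sequence approaching $y$ would violate properness on a compact neighborhood of $y$ in $F(\R^3)$), forcing $F(\R^3)=V$. But then $F\colon\R^3\xrightarrow{\sim}V$, contradicting the fact that $\R^3$ is simply connected while $V$ is not. The main obstacle in making this argument fully rigorous is the case where $S_F$ is a \emph{knotted} line: then $\pi_1(V)$ is a nonabelian knot group and the identity $a+b=1$ no longer follows purely group-theoretically. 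One must instead adapt the argument---perhaps by combining $F_*\tilde\mu_j=\pm\mu$ for each meridian with a transfer/degree computation, or by establishing a priori that the non-properness locus of a polynomial local diffeomorphism must be unknotted---in order to force $d=1$ in general.
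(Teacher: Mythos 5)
Your plan reproduces the key final idea of the paper's proof (at a point of $E=F^{-1}(S_F)$ the map $F$ is a local diffeomorphism, so a meridian of $S_F$ lifts to a \emph{loop}, forcing the covering degree off $S_F$ to be $1$ and hence injectivity and a contradiction), but it has a genuine gap, which you yourself flag: everything after the Alexander duality step uses $\pi_1(\R^3\setminus S_F)\cong\Z$, i.e.\ that the line $S_F$ is unknotted. This is not automatic and cannot be waved away: a closed semialgebraic curve homeomorphic to $\R$, and in particular a parametric polynomial curve, can realize any knot type (polynomial knots exist), and no a priori unknottedness of the non-properness set of a local diffeomorphism is known. If $\pi_1(\R^3\setminus S_F)$ is a nonabelian knot group $G$, your chain of deductions breaks at the first link: the cover $F^{-1}(V)\to V$ corresponds to a finite-index subgroup $H\le G$, and the abelianization of $H$ need not be $\Z$, so the identity $a+b=1$ does not follow; consequently you cannot conclude that a single meridian generates $\pi_1(F^{-1}(V))$, and $F_*\tilde\mu=\pm\mu$ alone does not force $d=1$ (it only shows $F_*\pi_1(F^{-1}(V))$ contains a meridian, which in a knot group does not pin down the index). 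So as written the argument proves the theorem only under the extra hypothesis that $S_F$ is unknotted.

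The paper circumvents exactly this difficulty by working \emph{at infinity} rather than globally: it removes a large ball $B_R$ and uses that, for $R$ large, $S_F\setminus B_R$ consists of two branches transversal to all spheres of radius $\ge R$, so $Y_R=\R^3\setminus(B_R\cup S_F)$ has the homotopy type of the complement of an affine line and $\pi_1(Y_R)\cong\Z$ \emph{regardless of any knotting} of $S_F$. The price is twofold, and these are the two ingredients your proposal does not need in its unknotted setting but would have to replace in general: (i) connectedness of $X_R=F^{-1}(Y_R)$ is no longer the trivial fact that a codimension-two set does not disconnect $\R^3$ (removing $F^{-1}(B_R)$ could disconnect it), and the paper proves it by a separate argument (statement $(*)$ with Lemmas \ref{1}--\ref{3}); (ii) one must know that $E$ has points lying over $S_F\setminus B_R$, which the paper gets from the L\"uroth-type Lemma \ref{lem} plus an Alexander duality/Euler characteristic count showing $E$ has exactly one branch at infinity, whence $F(E)=S_F$. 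Your observations that $E$ has no compact components and that $F^{-1}(V)$ is connected are correct and would simplify a complete proof, but to close the argument you would need either to carry out one of the repairs you sketch (e.g.\ an argument at infinity in the spirit of the paper) or to prove unknottedness of $S_F$, which is not established.
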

\begin{proof} 
Let $S_R\subset\R^3$ be a sphere centered at the origin and with a sufficiently large radius $R>0$ such that any other spere centered at the origin and with radius larger than or equal to  $R$ is transversal to $S_F$. Thus, $Y_R:=\R^3\setminus (S_F\cup B_R)$ has the same homotopy type of $\R^3\setminus L$ where $L$ is an affine line, hence the fundamental group of $\R^3\setminus (S_F\cup B_R)$ is cyclic and infinite. Take $X_R:=\R^3\setminus F^{-1}(B_R\cup S_F)=F^{-1}(Y_R).$ Hence if the set $X_R$ is connected, then the fundamental group of $X_R$ is also cyclic and infinite.

Take $E=F^{-1}(S_F).$ 
Note that $S_F\neq\emptyset$ implies that $E\neq\emptyset$. Indeed, if not, the fundamental group of $\R^3\setminus E=\R^3$ is trivial. Moreover  $\pi_1(\R^3\setminus S_F)$ is an infinite  group. By the covering property,  the topological degree of $F$ is equal to the  index $[\pi_1(\R^3\setminus S_F):F_*(\pi_1(\R^3))]=[\pi_1(\R^3\setminus S_F):\{1\}]=\infty.$ This is a contradiction, because  the mapping $F$ is polynomial
and it has finite fibers.
Now we need the following:

\begin{lemma}\label{lem}
Let $K$ be the Zariski closure of $S_F.$ Then $K=S_F\cup P$, where $P$ is a finite set of points. 
\end{lemma}

\begin{proof}
Indeed, by \cite{Je2}, $S_F$ is a parametric curve, i.e., there are non-constant polynomials  $\psi_1(t), \psi_2(t),\psi_3(t)$ such that $S_F=(\psi_1, \psi_2,\psi_3)(\R):=\psi(\R).$ Consider the
field $L=\R(\psi_1,\psi_2,\psi_3)$. By the L\"uroth Theorem
there exists a rational function $g(t)\in \R(t)$ such that
$L=\R(g(t)).$ In particular there exist  $f_1, f_2, f_3\in \R(t)$
such that $\psi_i(t)=f_i(g(t))$ for $i=1,2,3.$ In fact, we have
two induced maps $\overline{f} : \Bbb P^1(\R)\rightarrow \overline{X}\subset \Bbb
P^m(\R)$ and $\overline{g}: \Bbb P^1(\R)\rightarrow\Bbb P^1(\R)$. Here $\overline{X}$ denotes  the Zariski projective
closure of $S_F.$
Moreover, $\overline{f}\circ \overline{g}=\overline{\psi}.$ Let
$A_\infty$ denote the unique point at infinity 
of $\overline{X}$ and let
$\infty=\overline{f}^{-1}(A_\infty).$ Then
$\overline{g}^{-1}(\infty)=\infty$,  i.e., $g\in \R[t].$
Similarly $f_i\in \R[t].$ Hence $\psi=f\circ g$, where $f:\R\to X$ is a birational and polynomial mapping and $g:\R\to \R$ is a polynomial mapping.

Since $S_F$ has two ends, we see that the mapping $g$ is surjective. Hence we can assume that $\psi=f.$
We have $K=f(\C)\cap \R^3$ and it is enough to prove that there is only a finite number of points in the set
$f(\C\setminus \R)\cap \R^3.$ But if $z=f(x)\in \R^3$ and $x$ is not a real number, then also $z=f(\overline{x})$,
where the bar denotes the complex conjugation. In particular the fiber of $f$ over $z$ has at least two points.
Since the mapping $f$ is birational, the number of such points $z$ is finite.
\end{proof}

Let $E'$ be the Zariski closure of $E.$
By Lemma \ref{lem} we have  $E'=E\cup F^{-1}(P)$ and $F^{-1}(P)$ is a finite set of points. Hence at infinity $E$ looks like an algebraic variety. Let $\mathcal S$ be the one-point compactification of $\R^3.$
If $E$ has $k$ branches at infinity, then its compactification $\tilde{E}\subset {\mathcal S}$ is homeomorphic to the bouquet of $k$ circles. Using the Alexander duality we can compute the cohomology of ${\mathcal S}\setminus \tilde{E}=\R^3\setminus E.$ 
We have $$\tilde{H}_i(\tilde{E})\cong \tilde{H}^{3-i-1}(\R^3\setminus E).$$
Hence $\tilde{H}^p(\R^3\setminus E)=0$ for $p\not=1$ and  $\tilde{H}^1(\R^3\setminus E)={\Bbb Z}^k.$
In particular
$\chi(\R^3\setminus E)=1-k.$ In the same way we have $\chi(\R^3\setminus S_F)=0.$ Note that $\R^3\setminus E$ is a finite topological  covering of $\R^3\setminus S_F$ of degree say $d>0.$ Thus $\chi(\R^3\setminus E)= d \chi(\R^3\setminus S_F)=0.$ 
Consequently, $k=1$ and $E$ has only one  branch at infinity. In particular  $F(E)= S_F$.

Now we show that the set  $X_R$ is connected. In fact we will prove the following statement:

\vspace{5mm}

$(*)$ {\it Let $F:\R^3\to\R^3$ be a polynomial mapping as above. For every $R \ge 0$ the set $X_R:=F^{-1}(\R^3\setminus B_R)\setminus E$ is connected.}

\vspace{5mm}
Note that it is enough to prove that  $X:=F^{-1}(\R^3\setminus B_R)$ is connected. We prove this in several steps. Let $O\in \R^3\setminus S_F$ and let $L$ be a half-line with  origin  $O$, such that $L\cap S_F=\emptyset.$
Hence we can lift $L$ in a  $d=\deg F$ different ways; let $L_1,\ldots ,L_d$ be all these lifts. We can assume that $O$ is the center of coordinates.
Let $S_R$ be the sphere $\{ x: ||x||=R\}.$ By $S^i_R$ we will denote the connected component of  $F^{-1}(S_R)$ which intersects the curve $L_i$. Of course it is possible that $S_i=S_j$ for $i\not =j.$ Take $\phi_R(x):=||F(x)||-R.$ For $R>0$ the hypersurface $S_i$ divides $\R^3$ into two open connected components (see \cite{Lim}). Take a point $x\in S_i$ and consider a small  ball $B$ around $x$. The function $\phi_R$  has
signs $+$ and $-$ in the set $B\setminus S_i.$  A component  $\R^3\setminus S_i$, which contains this part of the ball $B$, on which 
$\phi_R|_B$ is negative will be called the interior of $S_i$ and denoted  by $Int(S_i)$; the another component is denoted by $Ext(S_i).$

\begin{lemma}\label{1}
There is an $\eta>0$ such that if $x\in S^i_R$ and $B(x,\eta)\cap S^j_{R'}$ $ \not =\emptyset$, then $S^j_R=S^i_R.$
\end{lemma} 

\begin{proof}
Let $A_i=L_i\cap S^i_R$ and let $\alpha(t), t\in [0,1],$ be a curve in $S^i_R\setminus E$ which joins $A_i$ and $x.$ For every point 
$x_t:=\alpha(t)$ let $V_t$ be a small ball with center in $x_t$ such that 

1) $F|_{V_t}: V_t\to F(V_t)$ is a homeomorphism,

2) $V_t \cap E=\emptyset.$

Let $\beta=F\circ \alpha.$ The curve $\beta$ is covered by the sets $U_t=F(V_t).$ Let $\epsilon$ be the Lebesgue number of this covering. 
Define the curve $\beta'(t)=\overline{O\beta(t)}\cap S_{R'}.$ If $|R-R'|<\epsilon$, then the curve $\beta'$ is contained in 
$\bigcup_{t\in [0,1]} U_t.$ In particular if we lift  $\beta'$ so that it starts at  $A_i':=L_i\cap S^i_{R'}$, then we obtain a 
curve $\alpha'$ which has it end in  $V_{1}.$ In particular we can join  $\alpha'(1)$ to $x.$ This implies that
if we take $\eta<\epsilon$ then the conclusion of Lemma \ref{1} is true. 
\end{proof}

\begin{lemma}\label{2}
The set $X_R$ is connected if and only if for every $i,j=1,\ldots,d$ the set $S^j_R$ is not contained in the interior of $S^i_R.$ 
\end{lemma}

\begin{proof}
Assume that for every $i,j=1,\ldots,d$ the set $S^j_R$ is not contained in the interior of $S^i_R.$ Take points $a,b\in X_R.$ Of course there is a path $\alpha$  in $\R^3\setminus E$ such that $\alpha(0)=a$ and $\alpha(1)=b.$ If $\alpha$ has no common points with $S^i_R, i=1,\ldots,d$, then $\alpha$ is contained in $X_R$ and we are done. Assume that $\alpha$ has a common point with $S^i_R.$ Note that by  assumption the point $\alpha(1)$ is not  in the interior of $S^i_R$. Let $t_0$ be the first point in $S^i_R$ and $t_1$ be the last point in $S^i_R$; by  assumption we have $t_1<1.$ Modify the curve $\alpha$ replacing the path $\alpha|_{[t_0,t_1]}$ by a curve $\gamma$ which is contained in $S^i_R$ and joins  $\alpha(t_0)$ and $\alpha(t_1).$ By a slight further modification of $\alpha$ as in the proof of Lemma \ref{1} above, we can assume that
$\alpha|_{[t_0,t_1]}$ lies in $X_R.$ Continuing,  we obtain a curve $\alpha$  completely contained in $X_R.$   

\vspace{5mm}

If the interior of $S^i_R$ contains $S^j_R$, then the exterior of $S^j_R$ is also contained in $Int(S^i_R)$ and points from the exterior of $S^i_R$ and the exterior of $S^j_R$ cannot be connected. 
\end{proof}

\begin{lemma}\label{3}
Assume that the set $X_R$ is connected. Then for $R'>R$ sufficiently close to $R$ the set $X_{R'}$ is also connected.
\end{lemma}

\begin{proof}
Take a (given) point $a\not\in B_R\cup S_F.$ Since the mapping $F|_{X_R}: X_R\to Y_R$ is a topological covering, the set $X_R$ is connected if and only if all points from the fiber $F^{-1}(a)=\{ a_1,\ldots, a_d\}$ can be connected to each other. If $X_R$ is connected then there is a curve $\alpha\subset X_R$ which connects all points $a_i, i=1,\ldots,d.$ Let $c=\mbox{inf}_{t\in [0,1]}  ||F(\alpha(t))||.$ Then $c>R.$ Consequently our statement is true for every $R'$ with  $c>R'>R.$
\end{proof}

Now we can prove  statement $(*).$ Note that for small $R$ the set $X_R$ is connected. Let $\mathcal A= \{ R\in \R: \ X_R \ \mbox{is not  connected}\}. $ Assume that  $\mathcal A$ is non-empty and take $a=\mbox{inf} \{ R: R\in \mathcal{A}\}.$ By Lemma \ref{3} the set $X_a$ is not connected. By Lemma \ref{2} there is an ${S^i}_a$ whose interior contains some $S^{j_1}_a,\ldots, S^{j_k}_a.$ In particular there is a path $\alpha(t), t\in [0,1],$ such that $||F(\alpha ((0,1))||\subset [0,a)$ and $\alpha (0)\in {S^i}_a, \alpha (1)\in {S^j}_a$, where $j\in \{j_1,\ldots, j_k\}.$ We can assume that $\alpha$ is transversal to $S^i_a$ and $S^j_a.$ Hence the function $q(t):=||F(\alpha(t))||, t\in [0,1],$ decreases  near $0$
and increases near $1.$ In particular there is an $\epsilon>0$ such that $q$ decreases on $[0,\epsilon]$ and increases on $[1-\epsilon, 1].$
Let $\beta =\mbox{max}_{t\in [\epsilon, 1-\epsilon]} q(t)$ and take $R_0=\mbox{max} \{ q(\epsilon), q(1-\epsilon), \beta\}.$
For $R_0 < R<a$  the function $q(t)-R$ has only two zeroes, one near $0$ and the other near $1.$ 

Take $ R_0\le R<a$ and let $q(t_0)=R, q(t_1)=R$, where $t_0$ is close to $0$ and $t_1$ is close to $1.$  
By Lemma \ref{1} the set $S^k_R$ which contains   $\alpha(t_0)$ is equal to $S^i_R,$ and the set $S^l_R$ which contains   $\alpha(t_1)$ is equal to $S^j_R$. Since $R<a$ and on the curve $\alpha(t), t\in (t_0,t_1),$ we have $q(t)<R,$ we conclude that $S^i_R=S^j_R$.
Consequently, $S^i_a=S^j_a$, which is a contradiction.

Hence the set $\mathcal A$ is empty and statement $(*)$ is proved.

\vspace{5mm}

Since $F(E)=S_F$, there is a point $x\in E\setminus F^{-1}(B_R)$ such that $F(x)\in S_F\setminus B_R.$ 
Let $V\subset \R^3\setminus F^{-1}(B_R)$ be a neighbourhood of $x$ such that $F|_V:V\to W$ is a diffeomorphism onto an open neighbourhod $W\subset \R^3\setminus B_R$ of $y$. We can choose a generator of $\pi_1(Y_R,y)$ to be the class of a loop $\gamma$ inside any tubular neighbourhood of $S_F$ in $W.$ So, it is possible to choose a generator $[\gamma]$ of $\pi_1(Y_R,y)$ inside  $W.$ Take  $\tilde\gamma=F|_W^{-1}(\gamma).$ Hence $[\tilde{\gamma}]\in \pi_1(X,x)$ and $F_*([\tilde{\gamma}])=[\gamma].$ This implies that $[\pi_1(Y_R,y):F_*(\pi_1(X_R,x))]=1$, so the general fiber of  $F|_{X_R}$ is one point. This means that the topological degree of the covering $F: \R^3\setminus E\to \R^3\setminus S_F$ is one. Consequently, $F$ is injective. Now by the Bia\l ynicki-Rosenlicht Theorem  (see \cite{BB})  $F$  is a bijection. 
\end{proof}

From the proof of our result we have:

\begin{corollary}
If $F:\R^3\to\R^3$ is a polynomial local diffeomorphism and ${\rm dim}\ S_F=1$, 
then $S_F$ cannot have only a one end.
\end{corollary}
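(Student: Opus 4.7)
The plan is to replay the proof of Theorem~\ref{main-theorem}, substituting the hypothesis ``$S_F$ has one end'' for ``$S_F$ is a line'', and noting that the endgame becomes easier because $\pi_1(Y_R)$ is now trivial rather than infinite cyclic.

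First I would establish that for $R$ sufficiently large, $Y_R = \R^3 \setminus (S_F \cup B_R)$ is simply connected. By the one-end hypothesis, $S_F \setminus B_R$ is a single connected unbounded component for large $R$; taking $R$ larger than the bounding radius of any loops or singular locus of $S_F$, this component has a single asymptotic direction and meets each large sphere $S_r$ in essentially one point. The deformation retraction of $\R^3 \setminus \overline{B_R}$ onto $S_R \simeq S^2$ sends this tail to a single point of $S^2$, so $Y_R$ deformation retracts onto $S^2 \setminus \{\mathrm{pt}\} \simeq \R^2$, which is contractible. Hence $\pi_1(Y_R)=0$.

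Next I would reuse statement $(*)$ from the proof of Theorem~\ref{main-theorem}: its verification via Lemmas~\ref{1},~\ref{2},~\ref{3} uses only that $F$ is a polynomial local diffeomorphism and nowhere invokes the two-ends hypothesis on $S_F$. So $X_R = F^{-1}(Y_R)$ is connected. Since $F|_{X_R} \colon X_R \to Y_R$ is a proper local homeomorphism between connected spaces---hence a finite covering---and $\pi_1(Y_R)=0$, this covering has degree one. Therefore $F|_{X_R}$ is a homeomorphism, and because the covering degree is constant, the covering $F\colon \R^3 \setminus E \to \R^3 \setminus S_F$ is also a bijection.

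To conclude, I would upgrade this to injectivity on all of $\R^3$ exactly as in the main theorem's endgame: if $F(x_1)=F(x_2)$ with $x_1 \neq x_2$, picking disjoint open neighborhoods $V_1, V_2$ on which $F$ is a diffeomorphism onto its image and using that $\dim S_F=1$, the open set $F(V_1) \cap F(V_2)$ meets $\R^3 \setminus S_F$, yielding two distinct preimages off $E$---a contradiction with the bijection above. Thus $F$ is injective, so by the Bia\l ynicki-Rosenlicht Theorem $F$ is bijective and hence a global homeomorphism, forcing $S_F=\emptyset$ and contradicting the one-end hypothesis.

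The main obstacle is the rigorous justification of $\pi_1(Y_R)=0$. Even with one end, the single unbounded component of $S_F$ could in principle have nontrivial braiding in $\R^3 \setminus \overline{B_R} \simeq S^2 \times (R,\infty)$. However, a semi-algebraic curve with one end has an asymptotically simple radial structure for $R$ large enough, and this justifies the deformation retraction described above. Once this topological claim is in place, the rest of the proof is a direct adaptation of the scheme of Theorem~\ref{main-theorem}.
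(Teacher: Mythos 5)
Your proposal is correct and takes essentially the same route as the paper, whose proof of this corollary is exactly the observation that for sufficiently large $R$ the set $\R^3\setminus (B_R\cup S_F)$ is simply connected, after which one finishes as in Theorem~\ref{main-theorem}: statement $(*)$ gives connectedness of $X_R$, so the covering over the simply connected $Y_R$ has degree one, and injectivity plus the Bia\l ynicki-Birula--Rosenlicht theorem yields a contradiction. Your added details (justifying $\pi_1(Y_R)=0$ via the conic structure at infinity of a one-ended semialgebraic curve, and noting that the $F(E)=S_F$ lifting step is no longer needed) merely flesh out what the paper leaves implicit.
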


\begin{proof}
Assume that $S_F$ has exactly one end. If $B_R$ is sufficiently large ball, then $\R^3\setminus (B_R\cup S_F)$ is simply connected and we can finish as above.
\end{proof}

Note that the main part of the proof of Theorem \ref{main-theorem} works also if $S_F$ is an arbitrary  curve with two ends (which possibly it is not homeomorphic to $\R$). The assumption that $S_F$ is homeomorphic to $\R$ we need only to know that $F(F^{-1}(S_F))=S_F.$ Hence for surjective mapping $F$ we can strenghten our result to the case when $S_F$ has two ends. In particular we have:

\begin{corollary}
Let  $F:\R^3\to\R^3$ be a polynomial local diffeomorphism and ${\rm dim}\ S_F=1$. If $F$ is surjective,
then $S_F$  has at least three ends.
\end{corollary}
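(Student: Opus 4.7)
The plan is to argue by contradiction: assume $F$ is surjective, $\dim S_F = 1$, and $S_F$ has at most two ends at infinity. I will show this forces $F$ to be injective, hence (by the Bia\l ynicki-Rosenlicht Theorem) bijective, which contradicts $S_F \neq \emptyset$.

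If $S_F$ has zero or one end, then for $R$ sufficiently large the complement $\R^3 \setminus (B_R \cup S_F)$ is simply connected -- in the zero-end case because $S_F \subset B_R$ makes this set equal $\R^3 \setminus B_R$, and in the one-end case by the observation in the proof of the preceding corollary. The argument of that corollary (combining the connectedness of $X_R$ via statement $(*)$ with the triviality of a covering of a simply connected base) then applies verbatim to produce the contradiction.

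The substantive case is that $S_F$ has exactly two ends. Here I follow the proof of Theorem \ref{main-theorem} step by step. As the remark immediately preceding the corollary emphasizes, every step of that proof carries over to an arbitrary two-ended semi-algebraic curve: the homotopy equivalence $Y_R \simeq \R^3 \setminus L$ (because outside a large ball $S_F$ is exactly two rays, so $S_F \cup B_R$ is topologically a ball with two rays attached), the Alexander-duality calculation giving $\chi(\R^3 \setminus S_F) = 0$ and hence a single branch of $E$ at infinity, and the connectedness statement $(*)$ for $X_R$. The only place where the hypothesis $S_F \cong \R$ was essential is to deduce $F(E) = S_F$, which is used to pick $x \in E$ with $F(x) \in S_F \setminus B_R$ and then lift a generator of $\pi_1(Y_R)$ to a loop in $X_R$ via a local inverse of $F$. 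But if $F$ is surjective, this equality is automatic: for any $y \in S_F$ choose any preimage $x \in \R^3$; then $x \in F^{-1}(S_F) = E$ by construction, so $y \in F(E)$. The rest of the proof of Theorem \ref{main-theorem} then yields that the covering $F\colon \R^3 \setminus E \to \R^3 \setminus S_F$ has degree one, so $F$ is injective, and hence bijective by Bia\l ynicki-Rosenlicht, the desired contradiction.

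The main obstacle is really a bookkeeping matter: verifying that the remark's claim that Theorem \ref{main-theorem}'s proof extends to any two-ended $S_F$ is literally correct. The two points deserving attention are the generalisation of Lemma \ref{lem} from a single parametric curve to a union of parametric components, and the persistence of the Alexander-duality output $\chi(\R^3 \setminus S_F) = 0$ when the one-point compactification of $S_F$ is a single circle -- which is precisely what "two ends" asserts. Both checks are asymptotic in nature and follow from the parametric structure of $S_F$ near infinity.
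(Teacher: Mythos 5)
Your argument is correct and is essentially the paper's own proof: the paper establishes this corollary precisely by observing (in the remark preceding the statement, combined with the previous corollary for the at-most-one-end case) that the proof of Theorem \ref{main-theorem} uses the hypothesis $S_F\cong\R$ only to obtain $F(F^{-1}(S_F))=S_F$, an equality which surjectivity supplies directly. One small simplification worth noting: since $F(E)=S_F$ now comes for free, the Alexander-duality/branch-count step (and with it the generalisation of Lemma \ref{lem}) that you flag as requiring verification can simply be omitted in the two-ended case, as its only role in the original proof was to produce that equality.
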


\end{document}